\newtheorem{theorem}{Theorem}[section]
\newtheorem{proposition}[theorem]{Proposition}
\newtheorem{definition}{Definition}[section]
\theoremstyle{definition}
\newcommand{\ZZ}{\ensuremath{\mathbb{Z}}}
\newcommand{\RR}{\ensuremath{\mathbb{R}}}
\newcommand{\ca}{\ensuremath{\mathcal{A}}}
\newcommand{\vd}{\ensuremath{\mathbf{d}}}
\newcommand{\vb}{\ensuremath{\mathbf{b}}}
\newcommand{\vu}{\ensuremath{\mathbf{u}}}
\newcommand{\vw}{\ensuremath{\mathbf{w}}}
\newcommand{\vv}{\ensuremath{\mathbf{v}}}
\newcommand{\vz}{\ensuremath{\mathbf{z}}}
\newcommand{\one}{\ensuremath{\mathbf{1}}}
\def \< {\langle}
\def \> {\rangle}
\begin{document}

\title[Finite projective planes and the Delsarte LP-bound]{Finite projective planes and the Delsarte LP-bound}

\author[M. Matolcsi]{M\'at\'e Matolcsi}
\address{M.M.: Budapest University of Technology and Economics (BME),
H-1111, Egry J. u. 1, Budapest, Hungary (also at Alfr\'ed R\'enyi Institute of Mathematics,
Hungarian Academy of Sciences, H-1053, Realtanoda u 13-15, Budapest, Hungary)}
\email{matomate@renyi.hu}

\author[M. Weiner]{Mih\'aly Weiner}
\address{M.W.: Budapest University of Technology and Economics (BME),
H-1111, Egry J. u. 1, Budapest, Hungary}
\email{mweiner@math.bme.hu}

\thanks{M. Matolcsi was supported by the ERC-AdG 321104 and by NKFIH Grant No. K109789, M. Weiner was supported by the ERC-AdG 669240, and by NKFIH Grant No. K124152.}

\begin{abstract}
We apply an improvement of the Delsarte LP-bound to give a new proof of the non-existence of finite projective planes of order 6, and uniqueness of finite projective planes of order 7. The proof is computer aided, and it is also feasible to apply to higher orders like 8, 9 and, with further improvements, possibly 10 and 12.
\end{abstract}

\maketitle

\bigskip

\section{Introduction}

In this note we apply the Delsarte LP-bound (and a small improvement of it) to the problem of existence of finite projective planes. The method is computer aided and we have carried it out for orders $n=6$ and 7. It is still feasible for other small orders such as $n=8, 9$ (although $n=9$ probably requires computation on a cluster and an increased running time). For higher orders, like 10 and 12, it is likely that further ideas are needed to exploit certain invariance properties of the problem in order to reduce the running time and make the approach feasible.

\medskip

For the sake of self-containment we include all the necessary notions in the Introduction, while we describe the results in Section \ref{results}. We begin by the definition of finite affine planes, projective planes, and complete sets of mutually orthogonal Latin squares (MOLs).

\begin{definition}
A finite affine plane of order $n$ is a collection $\ca=\{P_1, P_2, \dots, P_{n^2}\}$ of $n^2$ points, together with $n+1$ parallel classes\\
 $L_1, L_2, \dots, L_{n+1}$ on $\ca$ with the following properties: each $L_i$ is a collection $L_i=\{\ell_i^{(0)}, \ell_i^{(1)}, \dots, \ell_i^{(n-1)}\}$ of $n$ distinct parallel (i.e. non-intersecting) lines each containing $n$ points of $\ca$, such that for each $i\ne j$ the intersection of any two lines $\ell_i^{(k)}$ and $\ell_j^{(m)}$ is exactly one point.
\end{definition}

The definition of a finite projective plane is usually given in an intrinsic manner, referring only to incidences of lines and planes. However, for the sake of unified notations we prefer to give here an equivalent definition based on finite affine planes.

\begin{definition}
A finite projective plane of order $n$ is the disjoint union of a finite affine plane $\ca=\{P_1, P_2, \dots, P_{n^2}\}$ and a "line at infinity" $\ell_\infty=\{P_1^{(\infty)}, P_2^{(\infty)}, \dots, P_{n+1}^{(\infty)}\}$. We define $P_i^{(\infty)}$ to be the intersection of any two distinct lines $\ell_i^{(k)}$ and $\ell_i^{(m)}$ of the parallel class $L_i$ of the affine plane $\ca$.
\end{definition}

\begin{definition}
A Latin square $S=[s_{i,j}]_{i,j=0}^{n-1}$ of order $n$ is an $n\times n$ square filled out with symbols $0, 1, \dots, n-1$ such that each row and each column contains each symbol exactly once. Two Latin squares $S_1$, $S_2$ are called orthogonal if the ordered pairs $(s_{i,j}^{(1)}, s_{i,j}^{(2)})$ are all distinct (i.e. $(s_{i_1,j_1}^{(1)}, s_{i_1,j_1}^{(2)})\ne (s_{i_2,j_2}^{(1)}, s_{i_2,j_2}^{(2)})$ for any $(i_1,j_1)\ne (i_2, j_2)$, where $s_{i,j}^{(k)}$ denotes the entry of Latin square $S_k$ at position $(i,j)$). A complete set of mutually orthogonal Latin squares (MOLs) is a collection $S_1, S_2, \dots, S_{n-1}$ of $n-1$ pairwise orthogonal Latin squares.
\end{definition}

\medskip

It is well-known that the existence of these objects are all equivalent, but we give short proof of this fact in Proposition \ref{equiv} below, because we will need the construction appearing in the proof.

\medskip

If $n$ is a prime-power, finite projective planes of order $n$ can be constructed using finite fields. If $n$ is not a prime-power, it is widely believed that finite projective planes of order $n$ do not exist. Over 100 years ago Tarry \cite{tarry} proved that there exist no two orthogonal Latin squares of order 6, which implies the nonexistence of a finite projective plane of order $6$. His proof is based on a rather tedious checking of each $6\times 6$ Latin square. Some 40 years later, Bruck and Ryser \cite{bruckryser} proved their celebrated result that if a finite projective plane of order $d \equiv 1,2$ mod$(4)$ exists, then $d$ must be a sum of two squares. This result rules out an infinite family of non-primepower orders (including 6, again), but leaves the problem open for orders such as $d=10$ or $d=12$. As of today, for $d=10$ we only know the nonexistence because of a massive computer search \cite{LamThiefSwiercz}, and for $d=12$, the question is still open. In this paper we present an approach based on Delsarte's LP-bound, which can successfully be applied to small orders ($n=6, 7$), and possibly also to higher ones ($8\le n\le 12$) in the future. A different approach based on the non-commutative version of the Delsarte scheme was presented recently \cite{ncMol} by the authors for $n=6$.

\medskip

After these historical remarks, we turn to the question of equivalence of the notions defined above.

\begin{proposition}\label{equiv}
The existence of a finite affine plane of order $n$, a finite projective plane of order $n$, and a complete set of MOLs of order $n$ are all equivalent.
\end{proposition}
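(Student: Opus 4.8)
The plan is to establish the equivalence by a cycle of constructions, handling the genuinely nontrivial passage (affine plane $\leftrightarrow$ complete set of MOLs) with care and disposing of the affine/projective equivalence almost directly from the definitions. First I would observe that the equivalence of affine and projective planes is immediate: the definition of a finite projective plane builds one from any affine plane by adjoining the line at infinity, so the forward direction is free; conversely, deleting $\ell_\infty$ together with its $n+1$ points from a projective plane returns the underlying affine plane $\ca$, since the points $P_i^{(\infty)}$ were defined precisely as the common intersection points of the parallel classes $L_i$.

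The heart of the argument is the passage between a finite affine plane and a complete set of MOLs. Starting from $\ca$, I would first coordinatize the $n^2$ points using two fixed parallel classes, say $L_1$ and $L_2$: since each line $\ell_1^{(i)}$ meets each line $\ell_2^{(j)}$ in exactly one point and there are $n$ lines in each class, the assignment $(i,j)\mapsto \ell_1^{(i)}\cap\ell_2^{(j)}$ is a bijection between $\{0,\dots,n-1\}^2$ and the points of $\ca$. For each of the remaining $n-1$ parallel classes $L_k$ (with $3\le k\le n+1$) I would then define a square $S_{k-2}$ whose $(i,j)$ entry is $m$ exactly when the point with coordinates $(i,j)$ lies on $\ell_k^{(m)}$. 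The intersection axiom then does all the work: two points on a common line of $L_1$ cannot lie on the same line of $L_k$ (else that line and $\ell_1^{(i)}$ would share two points), so each row of $S_{k-2}$ is a permutation of the symbols, and likewise each column via $L_2$; thus $S_{k-2}$ is a Latin square. Similarly, if two distinct coordinate pairs produced the same ordered pair of symbols for two classes $L_k,L_{k'}$, the corresponding points would both lie on $\ell_k^{(m)}\cap\ell_{k'}^{(m')}$, forcing them to coincide, which is exactly orthogonality.

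For the reverse construction I would take the point set $\{0,\dots,n-1\}^2$, declare the rows and columns to be two parallel classes $L_1,L_2$, and for each Latin square $S_k$ declare $L_{k+2}$ to consist of the $n$ level lines $\ell_{k+2}^{(m)}=\{(i,j): s^{(k)}_{i,j}=m\}$. Each level line has exactly $n$ points because $m$ occurs once per row, lines within a class are disjoint since each cell carries a single symbol, and the required single-intersection property in every pairing of classes is verified by the same counting as above, orthogonality of $S_k,S_{k'}$ being precisely what guarantees that a level line of $L_{k+2}$ meets one of $L_{k'+2}$ in a single point. This produces the $n+1$ parallel classes of an affine plane of order $n$.

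I expect the main obstacle to be organizational rather than conceptual: keeping track of which parallel class plays the role of rows, columns, or a given Latin square, and confirming that the two constructions are mutually inverse so that the implications assemble into a genuine equivalence. The one step demanding real attention is the coordinatization, where I must check that $(i,j)\mapsto \ell_1^{(i)}\cap\ell_2^{(j)}$ is a bijection and not merely an injection; this follows because the count $n\cdot n=n^2$ matches the number of points exactly, and every subsequent Latin-square and orthogonality claim rests on having honest coordinates.
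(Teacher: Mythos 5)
Your proposal is correct and follows essentially the same route as the paper: the affine/projective equivalence is read off the definitions, and the affine/MOLs equivalence is obtained by coordinatizing with two distinguished parallel classes and letting each remaining class define a Latin square (you merely use $L_1,L_2$ where the paper uses $L_n,L_{n+1}$, and you spell out the verifications the paper leaves as ``easy to see'').
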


\begin{proof}
It is clear from the definition that the existence of finite affine planes and finite projective planes of order $n$ are equivalent (just add a line at infinity to an affine plane to get a projective plane, or remove any particular line from a projective plane to get an affine plane).

\medskip

Given an affine plane $\ca$ of order $n$ we can construct a complete set of MOLs as follows. First, consider the parallel classes $L_n$ and $L_{n+1}$ of $\ca$ as "vertical" and "horizontal" lines, respectively. This introduces a coordinate system on the points of $\ca$: let a point $P\in \ca$ be identified with the pair $(i,j)$ if and only if $P$ is the intersection of $\ell_n^{(i)}$ and $\ell_{n+1}^{(j)}$. Note that each pair $0\le i,j\le n-1$ corresponds to exactly one point of $\ca$. After this, the parallel class $L_k$ will give rise to a Latin square $S_k$ (for $1\le k\le n-1$) in the following natural way: for any $0\le m\le n-1$ put the symbol $m$ in $S_k$ to the entries $(i,j)$ which belong to the line $\ell_k^{(m)}$. By the properties of the affine plane $\ca$ it is easy to see that each $S_k$ becomes a Latin square, and $S_{k_1}$ and $S_{k_2}$ are orthogonal if $k_1\ne k_2$. Hence, we have constructed a complete set of MOLs.

\medskip

Given a complete set $S_1, S_2, \dots, S_{n-1}$  of MOLs of order $n$, the construction of a finite affine plane $\ca$ of order $n$ is completely analogous: the points of $\ca$ will be identified with coordinates $(i,j)$, the positions of a particular symbol $0\le m\le n-1$ in $S_k$ will define the line $\ell_k^{(m)}$, and the parallel class $L_k$ will be given as the collection of lines $\ell_k^{(m)}$ for $0\le m\le n-1$. Finally, the parallel classes $L_n$ and $L_{n+1}$ will be given as the vertical lines $\ell_n^{(m)}=\{(m,0), (m,1), \dots, (m, n-1)\}$ and the horizontal lines $\ell_{n+1}^{(m)}=\{(0,m), (1,m), \dots (n-1, m)\}$, respectively.
\end{proof}

\medskip

We will also need the fact that the existence of an affine plane of order $n$ is equivalent to the existence of a set of $n^2$ elements of 
$\ZZ_n^n\equiv \{0, 1, \dots, n-1\}^n$ of minimal Hamming distance $n-1$ (where $\ZZ_n$ denotes the cyclic group of order $n$). For the sake of
self-containment (and because it seems difficult to find a straightforward reference), we shall formally state and prove this equivalence, too.

\begin{proposition}\label{cor1}
Let $B\subset \ZZ^n_n$ be a collection of
$n^2$ elements of $\ZZ^n_n$ such that
$\vv-\tilde{\vv}$ has at most one coordinate equalling $0$ (i.e.\! such that $\vv$ and $\tilde{\vv}$ has at most one coinciding coordinate value) for all $\vv,\tilde{\vv}\in B, \vv\neq\tilde{\vv}$. Then $B$ can be partitioned into $n$ classes (disjoint subsets) of size $n$ such that $\vv-\tilde{\vv}$ has precisely one coordinate equalling to $0$ whenever $\vv,\tilde{\vv} \in B$ belong to different classes, and no coordinate equalling to $0$ whenever they are different elements of the same class. As such, the collection $B$ is naturally equivalent to an affine plane
of order $n$.
\end{proposition}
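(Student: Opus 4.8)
The plan is to first extract a strong regularity property of $B$ from the distance hypothesis, then to build the desired partition as the classes of a ``disagree in every coordinate'' relation, and finally to read off the affine-plane axioms directly. Throughout, write $v_c$ for the $c$-th coordinate of $\vv\in B$, and for $1\le c\le n$, $a\in\ZZ_n$ put $B_{c,a}=\set{\vv\in B : v_c=a}$.

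First I would show $|B_{c,a}|=n$ for every $c,a$. Indeed, any two distinct $\vv,\tilde\vv\in B_{c,a}$ already share coordinate $c$, so by hypothesis they agree nowhere else; hence in any fixed coordinate $c'\neq c$ the elements of $B_{c,a}$ take pairwise distinct values, forcing $|B_{c,a}|\le n$. Since $\sum_a|B_{c,a}|=|B|=n^2$ over $n$ values of $a$, every block has exactly $n$ elements. A companion remark is that for $c\neq c'$ the map $\vv\mapsto(v_c,v_{c'})$ is injective (two codewords cannot agree in both coordinates), hence a bijection $B\to\ZZ_n\times\ZZ_n$; in particular each prescribed pair of values in two distinct coordinates is realized by exactly one codeword.

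Next I would introduce the relation $\vv\approx\tilde\vv$ meaning that $\vv-\tilde\vv$ has no zero coordinate, and claim that its reflexive closure is an equivalence relation whose classes give the partition. Symmetry is trivial, so the whole game is \emph{transitivity}, which is the step I expect to be the crux. To prove it, suppose $\vv,\tilde\vv$ lie in a common block, say $v_c=\tilde v_c=a$, and let $\vu$ be any codeword outside $B_{c,a}$. The key claim is that $\vu$ disagrees everywhere with exactly one element of $B_{c,a}$: by the regularity above, for each coordinate $c'\neq c$ the block $B_{c,a}$ realizes every value of $\ZZ_n$ exactly once in coordinate $c'$, so exactly one element of $B_{c,a}$ matches $u_{c'}$; these $n-1$ matching elements are distinct, since an element matching $\vu$ in two coordinates would contradict the hypothesis, and therefore precisely $n-(n-1)=1$ of the $n$ elements of $B_{c,a}$ agrees with $\vu$ in no coordinate. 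Consequently, if $\vu\approx\vv$ and $\vu\approx\tilde\vv$ with $\vv\neq\tilde\vv$, these two cannot both lie in one block $B_{c,a}$ (note $\vu\approx\vv$ forces $u_c\neq v_c$, so $\vu\notin B_{c,a}$ and the claim applies); hence $\vv$ and $\tilde\vv$ share no coordinate, i.e.\ $\vv\approx\tilde\vv$. That is exactly transitivity.

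With transitivity in hand I would finish by counting. Each $\vv$ is collinear with exactly $n(n-1)$ codewords --- namely the $n-1$ other members of each of its $n$ blocks, all distinct --- so it disagrees everywhere with exactly $(n^2-1)-n(n-1)=n-1$ others; hence every $\approx$-class has size $n$, and there are $n$ of them, yielding the partition with the stated intra-class and inter-class agreement pattern. Finally, to identify $B$ with an affine plane I would take the $n+1$ parallel classes to be the $n$ block-partitions $\set{B_{c,a}}_a$ together with the partition into $\approx$-classes, each consisting of $n$ disjoint lines of $n$ points. Two lines from different parallel classes meet in exactly one point: for two coordinate blocks this is the bijectivity of $\vv\mapsto(v_c,v_{c'})$, while for a coordinate block against an $\approx$-class it holds because the $n$ pairwise-disagreeing points of an $\approx$-class realize all $n$ values in coordinate $c$, so exactly one of them lies in $B_{c,a}$. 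These are precisely the defining properties of a finite affine plane of order $n$, completing the equivalence.
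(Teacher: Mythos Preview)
Your argument is correct. The partition part follows the same counting skeleton as the paper: first the regularity $|B_{c,a}|=n$ (equivalently, the bijection $\vv\mapsto(v_c,v_{c'})$), then transitivity of the ``no shared coordinate'' relation via a counting argument, then class size $n$ by inclusion--exclusion over the blocks. The paper packages transitivity differently---it uses permutation invariance to set $\vv=(0,\dots,0)$, $\vw=(1,\dots,1)$ and counts the $n(n-1)$ vectors containing both a $0$ and a $1$---whereas you argue directly that any $\vu$ outside a block $B_{c,a}$ is $\approx$-related to exactly one element of that block. Your formulation is a bit cleaner since it avoids the WLOG step, but the underlying count is the same.

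Where you genuinely diverge is in the affine-plane identification. The paper takes $\ZZ_n^2$ as the point set and turns each vector $\vu\in B$ into the line $\{(0,u_0),\dots,(n-1,u_{n-1})\}$, with the $\approx$-classes becoming the parallel classes (plus the vertical lines). You instead take $B$ itself as the point set and declare the $n$ coordinate-fiber partitions together with the $\approx$-partition to be the $n+1$ parallel classes. Both constructions are valid and essentially dual to one another; yours has the advantage that all $n+1$ parallel classes arise uniformly, without needing to append an ad hoc family of vertical lines. One small omission: the paper also spells out the converse (affine plane $\Rightarrow$ such a $B$), which you do not address, but this direction is routine via the MOLS correspondence already established in the preceding proposition.
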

\begin{proof}
Let $B\subset \ZZ^n_n$ be a collection of size $n^2$ with the described property
and choose a pair $j<k$ of indices. Then for any $z_j,z_k\in \ZZ_n$ there can be at most one vector of $B$ whose $j^{\rm th}$ and $k^{\rm th}$ coordinates are $z_j$ and $z_k$, respectively. But as the pair $(z_j,z_k)$ can take $n^2$ different values and $|B|=n^2$, this means that there is {\it precisely one} such vector of $B$. It follows that each value of $\ZZ_n$ appears altogether $n^2$ times in the vectors of $B$.

\medskip

Let us pick now a vector $\vv\in B$. An application of a permutation of $\ZZ_n$ at the $j^{\rm th}$ coordinate of each vector of $B$ obviously preserves the required property of the collection, thus we may assume without loss of generality that $\vv=(0,0,\ldots 0)$.  Let $\mathbf w$ be a vector containing no 0s. Again, because of permutation invariance we may assume that $\mathbf w=(1,1,\ldots 1)$. Pick a vector $\vu\ne \vv\in B$ which contains no 1s. We will show that $\vu$ automatically contains no 0s, either.

\medskip

Each of the $n^2-2$ vectors of $B\setminus\{\vv,\mathbf w\}$ can contain at most one $0$ and one $1$. However, for each $j\neq k$ there is exactly one vector in $B$ whose $j^{\rm th}$ coordinate is $0$ and $k^{\rm th}$ coordinate is $1$, and these vectors are all different (since none of them can contain more $0$s or $1$s). Hence,  we have an $n(n-1)$-element subset $B_{0,1}$ of $B$ with each vector containing both a $0$ and a $1$. Therefore, $B_{0,1}\cup \{\vv, \mathbf w\}$ accounts for all the $n^2$ 0s and 1s appearing in $B$, and hence the remaining $n-2$ vectors of $B$ must be free of $0$s and 1s. As $\vu\notin B_{0,1}\cup \{\vv, \mathbf w\}$, it can only be one of the remaining $n-2$ vectors, and it is therefore free of 0s. 

\medskip

This argument shows in general two important facts. One is that for any vector $\vv\in B$ we have $n-1$ other vectors of $B$ with no coinciding coordinate with $\vv$. The other is that if $\vv,\vw$ and $\vu$
are $3$ different vectors of $B$ and the pair $(\vv,\vw)$ and
$(\vw,\vu)$ have no coinciding coordinates, then also $\vv$ and $\vu$ can have
no coinciding coordinates. From here it is clear that $B$ must be a disjoint union of classes in the manner described in the proposition.

\medskip

We now prove the equivalence of $B$ with an affine plane $\ca$ of order $n$. Assume $\ca$ is given. As in the construction of Proposition \ref{equiv}, associate a complete set of MOLs to the affine plane $\ca$. For any $1\le k\le n-1$ and $0\le m\le n-1$ the line $\ell_k^{(m)}$ corresponds to the positions $(i,j)$ of symbol $m$ in $S_k$. Note that each row and column of $S_k$ intersects $\ell_k^{(m)}$ exactly once. Let us order the points $(i,j)$ of $\ell_k^{(m)}$ according to their fist coordinate: $\ell_k^{(m)}=\{(0, j_0), (1, j_1), \dots, (n-1,j_{n-1})\}$, where $(j_0, j_1, \dots, j_{n-1})$ is automatically a permutation of the numbers $0, 1, \dots, n-1$. Let $\vv_k^{(m)}=(j_0, j_1, \dots, j_{n-1})$. Finally, for $k=0$ let $\vv_k^{(m)}=(m, m, \dots, m)$.

\medskip

By construction it is easy to see that $\vv_{k}^{(m_1)}-\vv_{k}^{(m_2)}$ contains no 0 coordinate: for $k=0$ it is obvious, and for $1\le k\le n-1$ the vectors correspond to positions of different symbols $m_1$ and $m_2$ in $S_k$. It is also easy to see that for any $k_1<k_2$ the vector $\vv_{k_1}^{(m_1)}-\vv_{k_2}^{(m_2)}$ contains exactly one 0 coordinate: it is obvious if $k_1=0$ (because $\vv_{k_2}^{(m_2)}$ is a permutation), and for $0<k_1<k_2$ the reason is that the lines $\ell_{k_1}^{(m_1)}$ and $\ell_{k_2}^{(m_2)}$ have exactly one point of intersection.

\medskip

Assume now that $B\subset \ZZ_n^n$ is given. We will construct an affine plane $\ca$ of order $n$. The points of $\ca$ will be given as elements of $\ZZ_n^2$. To each vector $\vu=(u_0, \dots, u_{n-1}) \in B$ associate the set of elements $\{(0,u_0), \dots, (n-1, u_{n-1})\}$. This will be considered as a line $\ell_\vu$ of $\ca$. Two such lines $\ell_\vu, \ell_\vv$ will be parallel if $\vu$ and $\vv$ have no coinciding coordinates, and have one point of intersection if $\vu$ and $\vv$ have one coinciding coordinate. Hence, from the first part of the proof it is clear that this gives rise to $n$ parallel classes of lines such the intersection of any two lines in different classes is exactly one point. Finally, we need to augment this construction with the set of "vertical" lines $\ell^{(m)}=\{(m,0), (m,1), \dots, (m, n-1)\}$, for $0\le m\le n-1$, and we obtain the affine plane $\ca$.
\end{proof}

\medskip

We now turn to the description of the particular form of the Delsarte LP-bound that we will be using. Given a finite Abelian group $(G, +)$ and a symmetric ``forbidden set'' $A=-A\subset G$, at most how many elements a subset $B=\{b_1,\ldots b_n\}\subset G$ can have, if all differences $b_j-b_k$ ($j\neq k$) ``avoid'' $A$ (i.e. fall into $A^c$)? This is a very general type of question and many famous problems can be re-phrased in this manner. (In some applications $G$ is not finite, e.g. $G=(\RR^n, +)$ but we will only consider finite groups in this note.)

\medskip

A method that has often proved fruitful when dealing with such problems
--- e.g.\! in the context of sphere-packing \cite{cohnelkies, sp8} or in the maximum number of code-words in error correcting codes \cite{delsarte} ---
is based on the observation (used originally by Delsarte in \cite{delsarte}) that the function $1_B\ast 1_{-B}$ is positive definite over $G$. A Fourier-analytic formulation of this general scheme over finite Abelian groups was described in \cite{int1}. We invoke the relevant result here.

\begin{theorem}[\cite{int1}]\label{dels}
Let $(G, +)$ be a finite Abelian group, $A=-A\subset G$ be a symmetric subset containing 0, and $B\subset G$ be a subset such that $b-b'\notin A$ for all $b\ne b'\in B$. Assume there exists a function $f: G\to \RR$ such that $f|_{A^c}\le 0$, $\hat{f}(\gamma)=\sum_{x\in G}\gamma(x)f(x)\ge 0$ for all characters $\gamma$ of $G$. Then
\begin{equation}\label{db}
|B|\le \frac{f(0)|G|}{\hat{f}(\one)},
\end{equation}
where $\one$ denotes the constant 1 character.
\end{theorem}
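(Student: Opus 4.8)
The plan is to prove the Delsarte LP-bound via the standard averaging-over-pairs argument, exploiting the positive-definiteness of $1_B * 1_{-B}$. The key object is the quantity $\sum_{b,b'\in B} f(b-b')$, which I will estimate from above using the hypothesis $f|_{A^c}\le 0$ together with the gap condition on $B$, and from below using the Fourier nonnegativity $\hat f\ge 0$. Comparing the two estimates yields the inequality directly.

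\emph{Upper bound.} First I would split the double sum over $B\times B$ into diagonal and off-diagonal terms:
\begin{equation*}
\sum_{b,b'\in B} f(b-b') = |B|\,f(0) + \sum_{\substack{b,b'\in B\\ b\ne b'}} f(b-b').
\end{equation*}
By assumption every off-diagonal difference $b-b'$ avoids $A$, i.e.\ lies in $A^c$, where $f\le 0$. Hence each off-diagonal term is nonpositive, and we obtain
\begin{equation*}
\sum_{b,b'\in B} f(b-b') \le |B|\,f(0).
\end{equation*}

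\emph{Lower bound.} Next I would rewrite the same double sum using Fourier analysis on $G$. The crucial identity is that $\sum_{b,b'\in B} f(b-b')$ equals a weighted sum of $\hat f$ against the squared modulus of the Fourier transform of the indicator $1_B$; concretely, writing $\widehat{1_B}(\gamma)=\sum_{b\in B}\overline{\gamma(b)}$, Parseval/convolution gives
\begin{equation*}
\sum_{b,b'\in B} f(b-b') = \frac{1}{|G|}\sum_{\gamma}\hat f(\gamma)\,\abs{\widehat{1_B}(\gamma)}^2.
\end{equation*}
Since $\hat f(\gamma)\ge 0$ for every character $\gamma$ and each $\abs{\widehat{1_B}(\gamma)}^2\ge 0$, every term is nonnegative, so I may discard all characters except the trivial one $\one$. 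For $\gamma=\one$ we have $\widehat{1_B}(\one)=|B|$, giving the lower bound
\begin{equation*}
\sum_{b,b'\in B} f(b-b') \ge \frac{1}{|G|}\,\hat f(\one)\,|B|^2.
\end{equation*}

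\emph{Conclusion.} Combining the two bounds yields $\frac{1}{|G|}\hat f(\one)\,|B|^2 \le |B|\,f(0)$, and dividing by $\frac{|B|}{|G|}\hat f(\one)>0$ gives $|B|\le f(0)|G|/\hat f(\one)$, as claimed. The only point requiring care is the normalization in the Fourier identity: the exact constants depend on the convention chosen for $\hat f$ and for the transform of $1_B$, so I would fix conventions so that the convolution theorem and Parseval's formula are consistent with the definition $\hat f(\gamma)=\sum_{x\in G}\gamma(x)f(x)$ given in the statement. I expect this bookkeeping of factors of $|G|$ to be the main (though routine) obstacle; the structural heart of the argument—the two-sided estimate of one double sum—is otherwise immediate from the hypotheses.
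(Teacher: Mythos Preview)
Your argument is correct: the two-sided estimate of $\sum_{b,b'\in B} f(b-b')$ is exactly the standard Delsarte proof, and your Fourier normalization matches the convention $\hat f(\gamma)=\sum_{x\in G}\gamma(x)f(x)$ stated in the theorem, so the factor of $|G|$ comes out right. The only implicit assumption you use is $\hat f(\one)>0$ (and $|B|>0$) when dividing at the end; since $\hat f(\one)\ge 0$ by hypothesis, the bound is vacuous when $\hat f(\one)=0$, so this is harmless.

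As for comparison with the paper: the paper does not actually prove this theorem. Its entire proof reads ``This is the inequality $\delta(A)\le\lambda^-(A)$ in Theorem~1.4 in \cite{int1}.'' So you have supplied a self-contained argument where the paper simply invokes a reference; your proof is the classical one and is almost certainly what underlies the cited result.
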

\begin{proof}
This is the inequality $\delta (A)\le \lambda^- (A)$ in Theorem 1.4 in \cite{int1}.
\end{proof}

\medskip

The problem of existence of finite affine planes fits into this Delsarte-scheme as follows: let $G=\ZZ_n^n$ and let the "forbidden set" $A$ be given as the set of vectors containing at least two coordinates equalling 0. In order to conclude the non-existence of a finite affine (or, equivalently, projective) plane of order $n$ it is sufficient to show that the maximum number of vectors in $G$ such that all differences avoid $A$, is strictly less than $n^2$. Unfortunately, the Delsarte LP-bound described in Theorem \ref{dels} implies only that the number of such vectors is $\le n^2$ (see Proposition \ref{dbound} below). However, we will be able to invoke ideas from \cite{impdel} where a small improvement of the Delsarte LP-bound is given. In particular, \cite[Theorem 2]{impdel} gives a concrete numerical improvement of the bound $|B|\le \frac{f(0)|G|}{\hat{f}(\one)}$  under specific circumstances. However, for the sake of clarity and self-containment, we will not cite the general result \cite[Theorem 2]{impdel} verbatim, but rather adapt the idea of its proof to the present situation in Proposition \ref{impbound}.

\section{Results}\label{results}

In this section we first apply Delsarte's LP-bound directly to the situation described above, and then make a small improvement to conclude non-existence of finite projective planes of order 6, and uniqueness of that of order 7.

\medskip

We aim to apply the bound \eqref{db}. Let $G=\ZZ_n^n$, and let $\omega=e^{2i\pi/n}$. For the sake of simpler notation in subsequent formulas, it is more convenient to change perspective and think of a multiplicative structure on $G$ rather than an additive one.  Namely, think of elements of $\ZZ_n$ as powers of $\omega$, with the operation on $\ZZ_n$ being multiplication. In this formalism an element of $G$ can be identified with a vector $\vz=(z_1, z_2, \dots, z_n)=(\omega^{j_1}, \omega^{j_2}, \dots, \omega^{j_n})$ with exponents $0\le j_i\le n-1$. The operation on $G$ is multiplication coordinate-wise. The identity element of $G$ is the vector $(1, 1, \dots, 1)$. The characters $\gamma$ of $G$ are functions of the form $\gamma(\vz)=\gamma(z_1, z_2, \dots, z_n)=z_1^{g_1}z_2^{g_2}\dots z_n^{g_n}$ for any choice of exponents $0\le g_i \le n-1$. Also, let the "forbidden set" $A$ be given as
\begin{equation}\label{a}
A=\{\vz=(z_1, z_2, \dots, z_n)\in G: \textrm{at least two coordinates equal 1}\}.
\end{equation}

\begin{proposition}\label{dbound}
Let $G=\ZZ_n^n$ be given with the multiplicative structure described in the previous paragraph. Let $B\subset G=\ZZ_n^n$ be a subset such that each quotient $\vb/\vb'$ ($\vb\ne \vb'\in B$) contains zero or one coordinate equalling 1. Then $|B|\le n^2$.
\end{proposition}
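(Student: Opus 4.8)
The plan is to apply Theorem~\ref{dels} directly, with $G=\ZZ_n^n$ and the forbidden set $A$ of \eqref{a}; note first that $A$ is symmetric (membership ``at least two coordinates equal $1$'' is invariant under $\vz\mapsto\vz^{-1}$) and contains the identity, so the hypotheses on $A$ hold. Everything then hinges on exhibiting a single test function $f\colon G\to\RR$ with $f|_{A^c}\le 0$ and $\hat f(\gamma)\ge 0$ for all characters $\gamma$, and on checking that the resulting value $f(0)|G|/\hat f(\one)$ comes out to be exactly $n^2$. I would take
\begin{equation*}
f(\vz)=k(\vz)\bigl(k(\vz)-1\bigr),\qquad k(\vz):=\#\{\,i:\ z_i=1\,\},
\end{equation*}
so that $k(\vz)$ counts the coordinates of $\vz$ equal to $1$. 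Since $f\ge 0$ everywhere and $f(\vz)=0$ exactly when $k(\vz)\le 1$, i.e.\ exactly on $A^c$, the sign condition $f|_{A^c}\le 0$ holds (with equality).

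The Fourier positivity is where this choice pays off. Writing $\delta_i$ for the function that is $1$ when $z_i=1$ and $0$ otherwise, one has $\delta_i=\tfrac1n\sum_{g=0}^{n-1} z_i^{\,g}$, a nonnegative combination of characters, and $\delta_i^2=\delta_i$. Hence
\begin{equation*}
f=\Bigl(\sum_{i}\delta_i\Bigr)^2-\sum_i\delta_i=2\sum_{i<j}\delta_i\delta_j
=\frac{2}{n^2}\sum_{i<j}\,\sum_{g,h=0}^{n-1} z_i^{\,g}z_j^{\,h},
\end{equation*}
so $f$ is a nonnegative linear combination of characters of $G$. Under the convention $\hat f(\gamma)=\sum_{x}\gamma(x)f(x)$, the quantity $\hat f(\gamma)$ equals $|G|$ times the coefficient of $\gamma^{-1}$ in this expansion; as all these coefficients are nonnegative, $\hat f(\gamma)\ge 0$ for every $\gamma$, and in particular $\hat f(\one)>0$.

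It then remains to evaluate the two quantities in \eqref{db}. At the identity $(1,\dots,1)$ we have $k=n$, so $f(0)=n(n-1)$. The value $\hat f(\one)=\sum_{x}f(x)$ equals $|G|$ times the coefficient of the trivial character, which is contributed only by the $g=h=0$ terms above, one for each of the $\binom n2$ pairs $i<j$, giving coefficient $\binom n2\cdot\frac{2}{n^2}=\frac{n-1}{n}$; thus $\hat f(\one)=n^{n-1}(n-1)$. Substituting into \eqref{db},
\begin{equation*}
|B|\le\frac{f(0)\,|G|}{\hat f(\one)}=\frac{n(n-1)\,n^{n}}{n^{n-1}(n-1)}=n^2.
\end{equation*}
The only real difficulty is the initial design of $f$: the sign condition forces $f$ to vanish on the large set $A^c=\{k\le 1\}$, while the spectral condition forces a nonnegative character expansion, and these pull in opposite directions. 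The product form $k(k-1)$ reconciles them precisely because both ``vanishing for $k\le 1$'' and ``nonnegative coefficients'' are read off from the single-coordinate indicators $\delta_i$; I would double-check the trivial-character coefficient $\frac{n-1}{n}$ and the identity value $n(n-1)$, since the exactness of the bound $n^2$ (rather than something larger) rests entirely on them.
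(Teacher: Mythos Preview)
Your proof is correct and is essentially the paper's own argument: the paper uses the test function $\bigl(\sum_{i}\sum_{j}z_i^{\,j}\bigr)\bigl(\sum_{i}\sum_{j}z_i^{\,j}-n\bigr)$, which equals $n^2\,k(\vz)(k(\vz)-1)$ because $\sum_{j=0}^{n-1}z_i^{\,j}=n\delta_i$, so your $f$ is just the paper's $f$ divided by the harmless scalar $n^2$. Your rewriting $k(k-1)=2\sum_{i<j}\delta_i\delta_j$ via $\delta_i^2=\delta_i$ is a slightly cleaner way to read off the nonnegativity of the Fourier coefficients and the trivial-character coefficient, but the content is identical.
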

\begin{proof}
Consider the function $f: G\to \RR$ given by the following formula:
\begin{equation}\label{tanu}
f(\vz)=f(z_1, z_2, \dots, z_n)=\left (\sum_{i=1}^n\sum_{j=0}^{n-1}z_i^j \right )\left (-n+\sum_{i=1}^n\sum_{j=0}^{n-1}z_i^j \right )
\end{equation}
It is immediate that $f$ vanishes on $A^c$ (the first term is 0 if none of the $z_i$ equal 1, and the second term is 0 if exactly one of the $z_i$ equals 1). It is also easy to see that $f$ is a polynomial with nonnegative coefficients, and hence that $\hat{f}(\gamma)\ge 0$ for all $\gamma$ (note here that $\hat{f}(\gamma)$ is exactly the coefficient of $z^\gamma$ times $|G|$). Also, $\hat{f}(\one)/|G|$ is the constant term of $f$, which equals $n(n-1)$. Furthermore, $f(1, 1, \dots, 1)=n^3(n-1)$. Therefore, equation \eqref{db} implies $|B|\le n^2$ as desired.
\end{proof}

\medskip

The statement of Proposition \ref{dbound} is quite trivial, as can be seen by easy combinatorial arguments. We included the proof above mainly to illustrate how the Delsarte LP-bound can be applied to this situation.

\medskip

The bound of Proposition \ref{dbound} is sharp whenever $n$ is a prime-power (simply because an affine plane of order $n$ exists, and Proposition \ref{cor1} provides a collection of $n^2$ suitable vectors). Also, it is not hard to prove that for any value of $n$ the function $f$ above is best possible in the sense that $n^2$ is the smallest possible value on the right hand side of \eqref{db}. Therefore, the Delsarte LP-bound \eqref{db}, in itself, is not sufficient to prove non-existence results for any order $n$. However, the ideas of \cite{impdel} can be invoked to get a small improvement on the Delsarte-bound. We will do so by applying the main idea of \cite{impdel} to this particular situation, rather than explicitly referring to the general results described in \cite{impdel}.

\medskip

\begin{proposition}\label{zerosum}
Let $G=\ZZ_n^n$ be given with the multiplicative structure as in the previous proposition. Assume $B\subset G=\ZZ_n^n$ is a set of $n^2$ vectors such that each quotient $\vb/\vb'$ (for $\vb\ne \vb'\in B$) contains exactly zero or one coordinate equalling 1. Let $z_0=1$ be a dummy variable (for convenience of notation), and for $0\le i<j\le n$, $0\le k, m\le n-1$, let $f_{i,j}^{k,m}(\vz)=f_{i,j}^{k,m}(z_1, z_2, \dots, z_n)=z_i^kz_j^m$. Then for $(k,m)\ne (0,0)$ we have
\begin{equation}\label{b}
\sum_{\vb\in B}f_{i,j}^{k,m}(\vb)=0
\end{equation}
\end{proposition}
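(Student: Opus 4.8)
The plan is to exploit the fact that under the hypothesis $|B| = n^2$ the Delsarte bound of Proposition \ref{dbound} is \emph{attained}, and to read off the resulting complementary-slackness conditions. I would start from the Fourier identity underlying the proof of Theorem \ref{dels}: for the witness function $f$ of \eqref{tanu}, writing $\widehat{1_B}(\gamma) = \sum_{\vb\in B}\gamma(\vb)$ (so $\widehat{1_B}(\one)=|B|$) and letting $\vb/\vb'$ denote the coordinatewise quotient in $G$, one has
\begin{equation*}
\sum_{\vb,\vb'\in B} f(\vb/\vb') = \frac{1}{|G|}\sum_{\gamma}\hat f(\gamma)\,\bigl|\widehat{1_B}(\gamma)\bigr|^2,
\end{equation*}
where the sum runs over all characters $\gamma$ of $G$. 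This is just the Fourier expansion of $1_B\ast 1_{B^{-1}}$ and is exactly the manipulation behind \eqref{db}.

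Next I would squeeze both sides. On the left, separating the diagonal gives $|B|\,f(1,\dots,1) + \sum_{\vb\neq\vb'} f(\vb/\vb')$; since every off-diagonal quotient lies in $A^c$ and $f$ vanishes on $A^c$, the off-diagonal sum is $0$, so the left side equals $|B|\,f(1,\dots,1) = n^2\cdot n^3(n-1)$. On the right, the trivial character contributes $\frac{1}{|G|}\hat f(\one)|B|^2 = n(n-1)\cdot n^4$, and all remaining terms are $\ge 0$ because $\hat f(\gamma)\ge 0$. As the two displayed quantities coincide (both equal $n^5(n-1)$, which is precisely the numerical coincidence making $f$ an optimal witness in Proposition \ref{dbound}), the nonnegative remainder must vanish:
\begin{equation*}
\sum_{\gamma\neq\one}\hat f(\gamma)\,\bigl|\widehat{1_B}(\gamma)\bigr|^2 = 0.
\end{equation*}
Since each summand is nonnegative, this forces $\widehat{1_B}(\gamma)=0$ for every nontrivial $\gamma$ with $\hat f(\gamma)>0$.

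It then remains to identify the characters with $\hat f(\gamma)>0$. Recalling that $\hat f(\gamma)$ equals $|G|$ times the coefficient of the monomial $z_1^{g_1}\cdots z_n^{g_n}$ in $f$, I would expand $f = P^2 - nP$ with $P=\sum_{i=1}^n\sum_{j=0}^{n-1}z_i^j = n + Q$, where $Q$ collects all nonconstant single-variable monomials (each with coefficient $1$). A genuine two-variable monomial $z_i^k z_j^m$ ($i\neq j$, $k,m\neq 0$) occurs only in $Q^2$, with coefficient $2$; a single-variable monomial $z_j^m$ ($m\neq 0$) receives $2n$ from $2nQ$, then $n-2$ from $Q^2$ (the ordered pairs $(k_1,k_2)\in\{1,\dots,n-1\}^2$ with $k_1+k_2\equiv m \bmod n$), and $-n$ from $-nP$, giving $2(n-1)$. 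Both are strictly positive, so every nontrivial character supported on at most two coordinates has $\hat f(\gamma)>0$. These are exactly the functions $f_{i,j}^{k,m}$ of the statement (with the dummy coordinate $z_0=1$ encoding the single-coordinate case), so for each such nontrivial $\gamma = f_{i,j}^{k,m}$ the slackness conclusion yields $\sum_{\vb\in B} f_{i,j}^{k,m}(\vb) = \widehat{1_B}(\gamma) = 0$, which is \eqref{b}.

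I expect the main obstacle to be the coefficient bookkeeping establishing \emph{strict} positivity uniformly in $n$, particularly in the single-variable case, where the three contributions ($2nQ$, $Q^2$, and $-nP$) must be combined correctly and one must handle reductions of exponents modulo $n$ with care. By contrast, the squeeze/complementary-slackness step, though it carries the conceptual weight, is essentially forced once the equality $|B|\,f(1,\dots,1) = \hat f(\one)|B|^2/|G|$ is observed.
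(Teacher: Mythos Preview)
Your proof is correct and takes a genuinely different route from the paper's. The paper argues directly by pigeonhole: for $1\le i<j\le n$ the ordered pairs $(b_i,b_j)$ over $\vb\in B$ are pairwise distinct (else some quotient would have two coordinates equal to $1$) and there are $n^2$ of them, hence they exhaust $\ZZ_n\times\ZZ_n$; the sum $\sum_{\vb\in B}b_i^kb_j^m$ then factors as $\bigl(\sum_{a}\omega^{ak}\bigr)\bigl(\sum_{b}\omega^{bm}\bigr)$, which vanishes for $(k,m)\neq(0,0)$, and the case $i=0$ is handled analogously. Your argument instead reads the conclusion off as a complementary-slackness condition: equality in the Delsarte bound forces $\widehat{1_B}(\gamma)=0$ at every nontrivial character in the support of $\hat f$, and you then check that this support contains every character of weight at most two. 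The paper's route is shorter and more elementary, and as a by-product yields the slightly stronger combinatorial fact that the projection of $B$ onto any two coordinates is a bijection onto $\ZZ_n^2$. Your route, on the other hand, is more conceptual within the Delsarte framework: it explains \emph{why} precisely the degree-$\le 2$ characters are the relevant linear constraints (they are exactly those appearing with positive coefficient in the optimal witness $f$), and the same slackness mechanism would deliver the analogous identities for any optimal witness in any instance of the Delsarte scheme.
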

\begin{proof}
Let $1\le i<j\le n$ and consider the ordered pairs $(b_i, b_j)$ formed by the $i$th and $j$th coordinates of the vectors $\vb\in B$. Obviously, all these pairs are distinct, otherwise a quotient $\vb/\vb'$ would have coordinates equalling 1 at position $i$ and $j$ (note here that the pairs $(b_i, b_j)$ being distinct gives an elementary proof of Proposition \ref{dbound}) . Also, there are $n^2$ such pairs because $|B|=n^2$. This means that the pairs $(b_i, b_j)$ must exhaust the set of all pairs $\{(\omega^0,\omega^0), (\omega^0,\omega^1), \dots, (\omega^{n-1},\omega^{n-1})\}$, each pair of exponents appearing exactly once. Hence, an elementary calculation shows that  $\sum_{\vb\in B}f_{i,j}^{k,m}(\vb)=0$.

\medskip

If $0=i<j$, then $f_{i,j}^{k,m}(\vz)=z_j^m$. Consider the $j$th coordinates of the vectors of $B$. There are $n^2$ such numbers, and from the argument above we see that each power of $\omega$ appears exactly $n$ times. An elementary calculation shows again that  $\sum_{\vb\in B}f_{i,j}^{k,m}(\vb)=0$.
\end{proof}

We hope to exploit Proposition \ref{zerosum} if some of the vectors appearing in $B$ are already known.

\begin{proposition}\label{impbound}
Let $G=\ZZ_n^n$, and $A\subset G$ be the "forbidden set" defined in \eqref{a}. Assume $B_0=\{\vv_1, \vv_2, \dots, \vv_s\}\subset G=\ZZ_n^n$ is given. Let $D\subset G$ denote the set of vectors $\vd\in G$ such that $\vd/\vv\in A^c$ for all $\vv\in B_0$. Assume that there exists a function $h: G\to \RR$ such that $h=\sum_{i,j,k,m}\lambda_{i,j,k,m}f_{i,j}^{k,m}$ is a linear combination of the functions $f_{i,j}^{k,m}$ appearing in the previous proposition, $\sum_{\vv\in B_0} h(\vv)=1$, and $h(\vd)\ge 0$ for all $\vd\in D$. Then the set $B_0$ cannot be extended to a set $B$ of $n^2$ vectors such that the quotients $\vb/\vb'$ fall into $A^c$ for all $\vb\ne \vb'\in B$.
\end{proposition}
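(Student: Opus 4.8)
The plan is to derive a contradiction by evaluating the single quantity $\sum_{\mathbf{b}\in B} h(\mathbf{b})$ in two incompatible ways. Suppose, contrary to the assertion, that $B_0$ does extend to a set $B\supseteq B_0$ of $n^2$ vectors all of whose pairwise quotients lie in $A^c$. Everything then hinges on comparing a ``global'' and a ``local'' computation of that sum.

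First I would evaluate the sum globally. Writing $h=\sum_{i,j,k,m}\lambda_{i,j,k,m}f_{i,j}^{k,m}$ and using that $B$ is a full set of $n^2$ vectors, Proposition \ref{zerosum} gives $\sum_{\mathbf{b}\in B} f_{i,j}^{k,m}(\mathbf{b})=0$ for every constituent monomial with $(k,m)\neq(0,0)$; by linearity this forces $\sum_{\mathbf{b}\in B} h(\mathbf{b})=0$. Next I would evaluate the same sum locally, splitting $B=B_0\cup(B\setminus B_0)$. The $B_0$-part equals $1$ by the normalization hypothesis $\sum_{\mathbf{v}\in B_0}h(\mathbf{v})=1$. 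For the remaining vectors, the crucial observation is that $B\setminus B_0\subseteq D$: if $\mathbf{b}\in B\setminus B_0$, then for every $\mathbf{v}\in B_0\subseteq B$ we have $\mathbf{v}\neq\mathbf{b}$, so the avoidance property of $B$ yields $\mathbf{b}/\mathbf{v}\in A^c$, which is exactly the condition defining membership in $D$. Hence $h(\mathbf{b})\geq 0$ for each such $\mathbf{b}$, and the $(B\setminus B_0)$-part is nonnegative. Combining the two evaluations gives $0=\sum_{\mathbf{b}\in B}h(\mathbf{b})\geq 1+0=1$, a contradiction, so no such $B$ can exist.

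The only genuinely delicate point, and the one I would check most carefully, is the vanishing of the global sum. This is entirely the content of Proposition \ref{zerosum}, and it is the reason $h$ must be assembled solely from monomials whose sum over a full $B$ vanishes; equivalently, $h$ must carry no constant term, since a constant would contribute a nonzero multiple of $n^2$ and destroy the identity $\sum_{\mathbf{b}\in B}h(\mathbf{b})=0$. Here one must be slightly watchful of the degenerate monomials that collapse to the constant $1$ (for instance $f_{0,j}^{k,0}=z_j^0=1$), and ensure none of them enters $h$. Once that is confirmed, the two evaluations of $\sum_{\mathbf{b}\in B}h(\mathbf{b})$ are irreconcilable, and the proof is complete.
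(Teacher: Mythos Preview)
Your argument is correct and is essentially identical to the paper's own proof: assume an extension $B$ exists, observe $B\setminus B_0\subseteq D$ so that $h\ge 0$ there, split $\sum_{\vb\in B}h(\vb)$ accordingly to get $\ge 1$, and contrast this with the vanishing forced by Proposition~\ref{zerosum}. Your extra caveat about excluding constant monomials from $h$ is a fair point of care (the paper glosses over it), but it does not alter the argument.
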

\begin{proof}
Assume by contradiction that an extension $B$ of $B_0$ exists such that $|B|=n^2$ and $\vb/\vb'\in A^c$ for all $\vb\ne \vb'\in B$. By the definition of $D$, any vector $\vv\in B\setminus B_0$ must belong to $D$. Therefore, $h(\vv)\ge 0$ for all $\vv\in B\setminus B_0$. This implies that
\begin{equation}\label{d}
\sum_{\vb\in B}h(\vb)=\sum_{\vv\in B_0}h(\vv)+\sum_{\vv\in B\setminus B_0}h(\vv)\ge 1,
\end{equation}
which contradicts equation \eqref{b}, because $h$ is a linear combination of the functions $f_{i,j}^{k,m}$, so the sum on the left hand side of \eqref{d} should be 0.
\end{proof}

\medskip

Proposition \ref{impbound} gives us a tool to prove non-existence and uniqueness results concerning finite projective planes. The idea is that there is only a restricted number of ways to fix the first few vectors of $B$, after which we can hope to arrive at a contradiction by finding a suitable witness function $h$ as in Proposition \ref{impbound}. As finding a function $h$ with the required properties involves solving a linear programming problem, it is most conveniently done by computer. We have documented this procedure for $n=6$ and 7 (see below), but it is still feasible for $n=8, 9$ (although $n=9$ probably requires computation on a cluster and an increased running time). As it stands now, the running time definitely gets out of proportion for $n\ge 10$, and further ideas are needed to make the search conclusive.

\begin{theorem}\label{main}
There exist no finite affine (or, equivalently, projective) plane of order 6. The projective plane of order 7 is unique up to isomorphism.
\end{theorem}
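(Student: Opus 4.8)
The plan is to reduce both statements to finite searches that are then discharged by the improved Delsarte bound of Proposition~\ref{impbound}. The underlying object in both cases is a collection $B\subset G=\ZZ_n^n$ of $n^2$ vectors whose pairwise quotients $\vb/\vb'$ avoid the forbidden set $A$ of \eqref{a}; by Proposition~\ref{cor1} (transported through the multiplicative encoding) such a $B$ is equivalent to an affine plane of order $n$, hence to a projective plane of order $n$. For the non-existence claim ($n=6$) it suffices to show that no such $B$ of size $36$ exists; for the uniqueness claim ($n=7$) it suffices to enumerate all such $B$ of size $49$ up to the natural symmetries and check that they all give isomorphic projective planes.

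First I would normalize the search using the symmetry group of the problem. As observed in the proof of Proposition~\ref{cor1}, applying an independent permutation of $\ZZ_n$ in each coordinate preserves the defining property, and so does permuting the $n$ coordinates themselves; together these generate a large group acting on candidate sets $B$. Using this freedom I would fix a small initial block $B_0=\{\vv_1,\dots,\vv_s\}$: up to symmetry one may take $\vv_1=(1,1,\dots,1)$, and then fix a second vector with no coordinate equal to $1$ (WLOG a constant vector $(\omega,\omega,\dots,\omega)$ after a coordinate permutation of values), and continue fixing further vectors so that only finitely many inequivalent choices of $B_0$ remain. The point of the encoding in Proposition~\ref{zerosum} is that the $n^2$ vectors of any valid $B$ satisfy the linear constraints \eqref{b}, so that any extension of $B_0$ is tightly constrained.

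Next, for each inequivalent partial configuration $B_0$ I would compute the admissible set $D$ of Proposition~\ref{impbound}, namely those $\vd\in G$ with $\vd/\vv\in A^c$ for every $\vv\in B_0$, and then solve the linear program that asks for coefficients $\lambda_{i,j,k,m}$ producing a witness $h=\sum \lambda_{i,j,k,m}f_{i,j}^{k,m}$ with $\sum_{\vv\in B_0}h(\vv)=1$ and $h\ge 0$ on $D$. Whenever such an $h$ is found, Proposition~\ref{impbound} certifies that $B_0$ cannot be completed to a full $B$ of size $n^2$, and that branch of the search is pruned. For $n=6$ the goal is to prune \emph{every} branch, yielding non-existence; for $n=7$ the goal is to prune every branch except those leading to completions, and then to verify by direct isomorphism testing that the surviving completions all coincide with the Desarguesian plane $PG(2,7)$.

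The main obstacle I expect is controlling the size of the case analysis so that it remains feasible: one must choose the initial block $B_0$ large enough that the linear program of Proposition~\ref{impbound} actually succeeds on each branch (a witness $h$ need not exist when $B_0$ is too small, since the plain Delsarte bound of Proposition~\ref{dbound} only gives $|B|\le n^2$ and is known to be best possible), yet the number of inequivalent blocks $B_0$ grows rapidly with $s$. Balancing these competing demands — exploiting the coordinate- and value-permutation symmetries aggressively enough to keep the branching tree small while still reaching blocks on which a certificate $h$ exists — is the crux, and it is precisely here that the improvement embodied in Proposition~\ref{impbound} over the bare LP-bound does the decisive work. The remaining steps, including the final isomorphism check for $n=7$, are finite verifications best carried out by computer.
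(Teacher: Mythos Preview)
Your proposal is correct in outline and follows essentially the same route as the paper: encode a putative plane as a set $B\subset\ZZ_n^n$ avoiding $A$, use symmetries to normalize an initial block $B_0$, and then apply Proposition~\ref{impbound} as a computer-verified LP certificate on each branch, with a final isomorphism check for $n=7$.

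The one substantive difference is in the normalization step, and it is worth flagging because it is what makes the paper's computation small. You propose to fix vectors one at a time ($\vv_1=(1,\dots,1)$, then a second constant vector, etc.) and rely on the coordinate- and value-permutation symmetries generically. The paper instead observes a structural fact: by Proposition~\ref{cor1} the $n$ constant vectors $(m,\dots,m)$ are automatically present in $B$, and the $n-1$ further vectors of $B$ with first coordinate $0$ necessarily form (after deleting that first coordinate) a Latin square of order $n-1$, which may be taken up to isotopy. This immediately reduces the choice of $B_0$ (of size $2n-1$) to the isotopy classes of Latin squares of order $n-1$: just $2$ classes for $n=6$ and $22$ classes for $n=7$. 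Your generic symmetry-breaking would eventually reach a finite list too, but without this Latin-square observation you have no a~priori control on its size, and the feasibility you correctly identify as the ``main obstacle'' is exactly what the paper's reduction resolves.
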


\begin{proof}
Assume that a finite affine plane of order $n$ exists. Then the construction of Proposition \ref{cor1} produces a collection of $n^2$ vectors $B=\{\vv_k^ {(m)}: 0\le k, m\le n-1\}\subset G$ such that for any $k_1\ne k_2$ and any $m_1, m_2$ the vector $\vv_{k_1}^{(m_1)}-\vv_{k_2}^{(m_2)}$ contains exactly one coordinate equaling 0, while for any $k$ and any $m_1\ne m_2$ the vector  $\vv_{k}^{(m_1)}-\vv_{k}^{(m_2)}$ contains no coordinates equaling 0. In fact, the constant vectors $(m, m, \dots, m)$ for $0\le m\le n-1$ appear automatically in $B$, by construction. Let us also select those non-constant vectors $\vv_1, \vv_2, \vv_{n-1}$ in $B$ whose first coordinate is 0. If consider these vectors as columns of length $n$, and place them one after the other, we obtain a matrix of size $n\times (n-1)$, with first row equaling constant 0. By the properties of the set $B$, if we delete the first row of 0's of the matrix, the remaining matrix is automatically a Latin square of size $n-1$. Therefore, we can assume without loss of generality that $B$ contains the constant vectors $(m, m, \dots, m)$ for $0\le m\le n-1$, and some further vectors $\vv_1, \vv_2, \dots, \vv_{n-1}$ whose first coordinate is 0, and whose other coordinates form a Latin square $S$ of size $n-1$. It is also clear that $S$ can be chosen as any representative of an isotopy class of the Latin squares of size $n-1$ (i.e., we are free to permute rows, columns and symbols in $S$). After fixing $S$, $2n-1$ vectors of $B$ are already given. We let $B_0$ denote the set of these vectors, and hope to arrive at a contradiction by applying Proposition \ref{impbound} to testify that $B_0$ cannot be extended to a full set of $n^2$ vectors $B$.

\medskip

For $n=6$ there are only 2 isotopy classes $S_1^{(5)}, S_2^{(5)}$  of Latin squares (see \cite{web}) of order $n-1=5$. Hence, there are essentially only two different ways of picking $B_0$, the first 11 vectors of our hypothetical set $B$. A short computer code \cite{code} then testifies that a suitable witness function $h$ (as described in Proposition \ref{impbound}) can be found for $B_0$ corresponding to $S_2^{(5)}$, proving that $B_0$ cannot be extended to a full set of 36 vectors in this case. In the case of $S_1^{(5)}$, the set of candidate vectors $D$ (as defined in Proposition \ref{impbound}) contains 75 vectors, and an appropriate witness function $h$ does not exist. However, if we pick any of those 75 vectors to be further included in $B_0$, a witness function already exists in all 75 cases. This concludes the proof of non-existence of finite projective planes of order 6.

\medskip

For $n=7$ there are only 22 isotopy classes $S_1^{(6)}, S_2^{(6)}, \dots, S_{22}^{(6)}$ of Latin squares (see \cite{web}) of order $n-1=6$. Hence, there are essentially only 22 different ways of picking $B_0$, the first 13 vectors of our hypothetical set $B$. A short computer code \cite{code} then testifies that a suitable witness function $h$ (as described in Proposition \ref{impbound}) can be found in 19 of these cases (the exceptional cases being $S_1^{(6)}, S_2^{(6)}$ and $S_4^{(6)}$. In two of the cases (for $S_1^{(6)}$ and $S_4^{(6)}$)  the set of candidate vectors $D$ consists of 288 and 216 vectors, respectively, and for all choices of those vectors a suitable witness function already exists. The only remaining case $S_2^{(6)}$ leads to the unique affine plane of order 7.
\end{proof}

Let us make a few concluding remarks. The success of the approach depends essentially on two factors: the number of isotopy classes of Latin squares $S$ of order $n-1$, and whether a suitable witness function (as described in Proposition \ref{impbound}) can typically be found once a representative $S$ is fixed. Unfortunately, the number of isotopy classes grows very fast. Also, as $n$ increases we encounter many cases when a suitable witness function $h$ simply does not exist after fixing $S$. In such a case, further vectors must be added to $B_0$ until a suitable function $h$ is finally found. However, adding further vectors to $B_0$ means branching out the search space, and increasing the running time. We have not made a full documentation for $n=8$, but we did test the 564 different Latin squares of size 7 as choices for $S$. In 230 of those cases a witness function $h$ exists, while in the remaining 334 cases further vectors must be added to $B_0$. Judging upon this, we estimate the running time of the full algorithm on a single PC to be a few days for $n=8$.
For $n=9$ we tend to believe that the running time could still be reasonable if the computations are made on a cluster. However, some new ideas seem to be necessary to cover any case $n\ge 10$. For this we note that there is a fair amount of flexibility in the method: there are many ways to go about choosing the first few vectors $B_0$ of $B$, and one does not need to stick to the idea of Latin squares of order $n-1$ described above. Other selections of $B_0$ may exploit the symmetries of the problem better, and hence lead to a much reduced running time.

\begin{center}
\large{Acknowledgement}
\end{center}

The authors are grateful for the reviewer for several suggestions to improve the quality of the paper.


\begin{thebibliography}{11}

\bibitem{bruckryser}
\textsc{R. H. Bruck \& H. J. Ryser}:
\newblock{\em The non-existence of certain projective planes.}
Can. J. Math. {\bf 1} (1949), 88--93.

\bibitem{cohnelkies}
\textsc{H. Cohn \& N. Elkies}:
\newblock{\em New upper bounds on sphere packings I}.
Ann. of Math. {\bf 157} (2003),  no. 2, 689--714.

\bibitem{delsarte}
\textsc{P. Delsarte}:
\newblock{\em Bounds for unrestricted codes, by linear programming.}
Philips Res. Rep. {\bf  27}  (1972), 272--289.

\bibitem{LamThiefSwiercz}
\textsc{C. W. H. Lam, L. Thiel \& S. Swiercz}:
\newblock{\em The non-existence of finite projective planes of order 10.}
Can. J. Math. {\bf XLI} (1989), 1117--1123.

\bibitem{int1}
\textsc{M. Matolcsi, I. Z. Ruzsa}:
\newblock{\em Difference sets and positive exponential sums I, General properties.}
J. Fourier Anal. Appl. {\bf 20} (2014), 17--41.

\bibitem{impdel}
\textsc{M. Matolcsi, M. Weiner}:
\newblock{\em An Improvement on the Delsarte-Type LP-Bound with Application to MUBs.}
Open Systems $\&$ Information Dynamics, {\bf 22}, No. 1 (2015).

\bibitem{ncMol}
\textsc{M. Matolcsi, M. Weiner}:
\newblock{\em Character tables and the problem of existence of finite projective planes.}
submitted for publication, https://arxiv.org/abs/1709.06149


\bibitem{code}
\textsc{M. Matolcsi, M. Weiner}:
Documentation of the computer search: http://math.bme.hu/~matolcsi/docu.htm

\bibitem{tarry}
\textsc{G. Tarry}:
\newblock{\em Le probl\`eme des 36 officiers.}
C. R. Assoc. Fran. Av. Sci. {\bf 1} (1900),
122--123, {\bf 2} (1901), 170--203.

\bibitem{sp8}
M. Viazovska:
\emph{The sphere packing problem in dimension 8.}
Annals of Math., \textbf{185}, 991--1015, (2017).

\bibitem{web}
http://users.cecs.anu.edu.au/~bdm/data/latin.html (webpage for a catalogue of Latin squares)

\end{thebibliography}
\end{document}